\def\be{\begin{equation}}
\def\ee{\end{equation}}
\def\bse{\begin{subequations}}
\def\ese{\end{subequations}}
\let\be\beta
\newtheorem{theorem}{Theorem}
\newtheorem{lemma}[theorem]{Lemma}
\newtheorem{proposition}[theorem]{Proposition}
\newtheorem{remark}[theorem]{Remark}
\newtheorem{definition}[theorem]{Definition}
\def\bse{\begin{subequations}}
\def\ese{\end{subequations}}
\title{The Keller-Segel model with mass critical exponent}
\author{ {\bf\large Shen Bian}\thanks{Corresponding author. Email address: bianshen66@163.com}  \qquad  {\bf\large Yichen Zou}\thanks{ Email address: 2023201131@buct.edu.cn}  \\
\
\\
{\it\small Department of Mathematical Sciences, Beijing University of Chemical Technology} \\
{\it\small Beijing, 100029, P.R. CHINA }\\
\vspace{1.0mm} }
\date{}
\begin{document}
\let\cleardoublepage\clearpage

\maketitle

\begin{abstract}
We consider a Keller-Segel model with non-linear porous medium type diffusion and non-local attractive power law interaction, focusing on potentials that are less singular than Newtonian interaction. Here, the nonlinear diffusion is chosen to be $m=2-\frac{2s}{d}$, in which case the steady states are compactly supported. We analyse under what conditions on the initial data the regime that attractive forces are stronger than diffusion occurs and classify the conditions for global existence and finite time blow-up of solutions. It is shown that there exists a threshold value which is characterized by the optimal constant of a variant of the Hardy-Littlewood-Sobolev inequality. Specifically, the solution will exist globally if the initial data is below the threshold, while the solution blows up in finite time when the initial data is above the threshold.
\end{abstract}

\section{Introduction}

In this paper, we investigate qualitative properties of non -negative solutions for the aggregation-diffusion equation
\begin{align}\label{PKS}
\left\{\begin{array}{l}
u_{t}=\Delta u^{m}-\nabla \cdot(u \nabla \phi), \\
\left(-\Delta\right)^{s} \phi(x, t)=u(x, t), \\
u(x, 0)=u_{0}(x) \geq 0,
\end{array}\right.
\end{align}
where the diffusion exponent is taken to be $m>1$, $2<2s<d$. $u$ represents the density of cells, the chemoattractant $\phi$ is governed by a diffusion process and can be expressed by the convolution of Riesz potential of $u(x, t)$, that is
\begin{align}\label{phi}
\phi\left(x, t\right)=c_{d,s} \int_{{R}^{d}} \frac{u(y, t)}{|x-y|^{d-2 s}} d y,
\end{align}
where $c_{d,s}$ is given by
\begin{align}\label{cds}
c_{d,s}=\frac{\Gamma(d/2-s)}{\pi^{d/2}4^s\Gamma(s)}.
\end{align}
The initial data throughout this paper is assumed to satisfy
\begin{align}\label{initial}
u_0\in L_+^1\cap L^{\infty}(\mathbb{R}^d),\quad \int_{\mathbb{R}^d}|x^2|u_0(x)dx<\infty.
\end{align}

A fundamental property of the solutions to \eqref{PKS} is the formal conservation of the total mass of the system
\begin{align}\label{Mass}
M:=\int_{\mathbb{R}^d}u_0(x)dx=\int_{\mathbb{R}^d}u(x)dx,\quad \text{for}~~ t\geq 0.
\end{align}

Nonlinear diffusion with $m > 1$ describes the local repulsion of cells while the attractive nonlocal term models cell movement toward chemotactic sources or attractive interaction between cells due to cell adhesion. The main characteristic of equation \eqref{PKS} is the competition between the diffusion and the nonlocal aggregation. This is well presented by the free energy
\begin{align}\label{free energy}
F\left(u\right)&=\frac{1}{m-1} \int_{{R}^{d}} u^{m} d x-\frac{1}{2} \int_{{R}^{d}} \phi u d x \nonumber\\
&=\frac{1}{m-1} \int_{{R}^{d}} u^{m} d x-\frac{1}{2}c_{d,s} \iint_{{R}^{d} \times {{R}^{d}}} \frac{u(x, t) u(y, t)}{|x-y|^{d-2 s}} d x d y .
\end{align}
The competition between these two terms leads to finite time blow-up and global existence. As a matter of fact, \eqref{PKS} can be recast as
\begin{align}\label{free1}
    u_t=\nabla\cdot\left(u\nabla\mu\right),
\end{align}
where $\mu$ is the chemical potential
\begin{align}\label{free2}
    \mu=\frac{m}{m-1} u^{m-1}- \phi.
\end{align}
Multiplying \eqref{free1} by $\mu$ and integrating it in space one obtains
\begin{align}\label{free3}
\frac{dF(u)}{dt}=-\int_{\mathbb{R}^d} u|\nabla\mu|^2 dx\leq 0.
\end{align}
This implies that $F[u(\cdot, t)]$ is non-increasing with respect to time.

When $s=1$, \eqref{PKS} becomes the classical Keller-Segel model \cite{KS70}, which has been extensively explored for \cite{B13,A09,A08,A06,V12,J19,J15,I12,P07,S11,S06,Y06}. When $s\neq 1$, spatial fractional derivatives can be used for anomalous diffusion or dispersion when a particle plume spreads at a rate inconsistent with the Brownian motion models. The appearance of spatial fractional derivatives in diffusion equations are exploited for macroscopic description of transport and often lead to super-diffusion phenomenon \cite{L17}. The fractional Laplacian operators $\left(-\Delta\right)^s$ for $0<s<1$ models anomalous diffusion and enables the study of long-range diffusive interactions \cite{S04,M09,AM11}, and the hyper-viscosity $(s>1)$ is employed in turbulence modeling \cite{U08}.

Notice that equation \eqref{PKS} possesses a scaling invariance which leaves the $L^p$-norm invariant and produces a balance between the diffusion and the aggregation terms where
\begin{align*}
    p=\frac{d(2-m)}{2s}.
\end{align*}

Indeed, if $u(x, t)$ is a solution to \eqref{PKS}, then $u_{\lambda}(x, t)=\lambda u\left(\lambda^{\frac{2-m}{2 s}} x, \lambda^{m-1+\frac{2-m}{s}} t\right)$ is also a solution to \eqref{PKS} and this scaling preserves the $L^{p}$-norm $\left\|u_{\lambda}\right\|_{L^{p}(\mathbb{R}^{d})}=\|u\|_{L^{p}(\mathbb{R}^{d})}$. For the critical case  $m_{*}=2-2 s / d$, the above scaling becomes the mass invariant scaling $u_{\lambda}(x, t)=\lambda u\left(\lambda^{1 / d} x, \lambda^{1+\frac{2-2 s}{d}} t\right)$. For the subcritical case $m>2-2 s / d$, the aggregation dominates the diffusion for low density and prevents spreading. While for high density, the diffusion dominates the aggregation thus blow-up is precluded. On the contrary, for the supercritical case $1<m<2-2 s / d$, the aggregation dominates the diffusion for high density (large $\lambda$) and the density may have finite time blow-up. While for low density (small $\lambda$), the diffusion dominates the aggregation and the density has infinite time spreading. These behaviors also appear in many other physical systems such as thin film, Hele-Shaw, stellar collapse as well as the nonlinear Schrödinger equation \cite{T04}.

When $s\neq 1$, we stress that a lot of efforts have been put to investigate the steady states of \eqref{PKS} in terms of the diffusion exponent $m$ \cite{V21,V17,J19,J18,H20}. Specifically, for $m>2-2 s / d$, this is the diffusion-dominated regime. There exists a unique up to translations compactly supported steady state to \eqref{PKS}. Moreover, such steady state is radially decreasing and it is the global minimiser of the energy functional $F(u)$. We refer to \cite{J18} for details. For $1<m<2-2 s / d$, this is the aggregation-dominated regime. There are two subcases for stationary states \cite{B24}. For $\frac{2 d}{d+2 s}<m<2-\frac{2 s}{d}$, all the stationary states are compactly supported. While they are not compactly supported for $1<m \leq \frac{2 d}{d+2 s}$ and there is an explicit formula of one parameter family of stationary states in the case $m=\frac{2 d}{d+2 s}$. For $m=2-\frac{2s}{d}$, the existence of the stationary states can only happen for a critical mass $M_*$ which is characterised by the optimal
constant of a variant of the HLS inequality.

In this work, we mainly focus on dynamic solution to \eqref{PKS}. It should be pointed out that in the case $0<s<1$, by observing $2s-2<0$ in \eqref{est2} see \Cref{local}, the dynamic solution will exist globally for any initial data. When $s>1$, for $m>2-2 s/d$, the solution exists globally for any initial data \cite{J19}. For $1<m<2-2 s/d$, it has been prove in \cite{B24} that there is a critical threshold on the initial data that separates global case and blow-up of finite time solutions to \eqref{PKS}. \textbf{Our main goal} in this paper is to detect dynamic behaviors of solutions to \eqref{PKS} for the case $m=2-2 s/d$ with $s>1$.

Before proceeding further to show the dynamic behavior of the solution, we define the weak solution which we will deal with in this paper.

\begin{definition}[Weak solution]Let $u_0$ be an initial condition satisfying \eqref{initial} and $T\in(0,\infty]$. A weak solution to \eqref{PKS} with initial data $u_0$ is a non-negative function $u\in L^\infty(0,T;L_+^1\cap L^\infty(\mathbb{R}^d))$ and for any $0<t<T$
\begin{align}\label{weak}
    \int_{\mathbb{R}^d}\psi u(\cdot,t)-\int_{\mathbb{R}^d}\psi u_0(x)=&\int_0^t \int_{\mathbb{R}^d}\Delta\psi u^mdxds\nonumber\\
    &-\frac{1}{2}\int_0^t \iint\limits_{\mathbb{R}^d\times \mathbb{R}^d}\frac{[\nabla\psi(x)-\nabla\psi(y)]\cdot(x-y)}{|x-y|^2}\frac{u(x,s)u(y,s)}{|x-y|^{d-2s}}
  \end{align}
for $\forall \psi\in C_0^\infty (\mathbb{R}^d).$
\end{definition}

Here, we will make a fundamental use of a variant to the HLS(VHLS) inequality, see \Cref{VHLS}: for all $h \in L^{1}(\mathbb{R}^{d}) \cap {L}^{m}(\mathbb{R}^{d})$, there exists an optimal constant $C_{*}$ such that
\begin{align}\label{1HLS1}
\left|\iint_{\mathbb{R}^{d} \times \mathbb{R}^{d}} \frac{h(x) h(y)}{|x-y|^{d-2}} dx dy\right| \leq C_{*}\|h\|_{L^{m}(\mathbb{R}^{d})}^{m}\|h\|_{L^{1}(\mathbb{R}^{d})}^{2s / d} .
\end{align}
The VHLS inequality and the identification of the equality cases allow us to give \textbf{the main result} of this work, namely, the following
sharp critical mass
\begin{align}\label{1HLS2}
M_{*}=\left[\frac{2}{(m-1)C_{*}c_{d,s}}\right]^{\frac{d}{2s}}
\end{align}
for \eqref{PKS}. More precisely, we will show that global solutions exist for $M > M_*$, see \Cref{GE}, while there are finite time blowing-up solutions otherwise for $M < M_*$, see \Cref{B}.

The results are organised as follows. This work is entirely devoted to establishing an exact criteria on the initial data for the global existence and finite time blow-up of solutions to \eqref{PKS}. With this aim we firstly construct the existence criterion which shows a key maximal existence time for the free energy solution to \eqref{PKS} in Section 2. Section 3 mainly explores the variational structure of the modified Hardy-Littlewood-Sobolev inequality. Finally, Section 4 makes use of the extremum function of the VHLS inequality to prove the main theorem concerning the dichotomy. The global existence for small initial data and finite time blow-up for large initial data.


\section{Local existence and blow-up criteria} \label{Local existence}

Consider regularization issues
\begin{align}\label{regularization1}
\left\{\begin{array}{l}
\partial_{t} u_{\varepsilon}=\Delta u_{\varepsilon}^{m}+\varepsilon \Delta u_{\varepsilon}-\nabla \cdot\left(u_{\varepsilon} \nabla \phi_{\varepsilon}\right), ~x \in {R}^{d},~ t>0, \\
u_{\varepsilon}(x, 0)=u_{0 \varepsilon} \geq 0,
\end{array}
\right.
\end{align}
where $\phi_{\varepsilon}$ is given by
\begin{align}\label{regularization2}
\phi_{\varepsilon}=R_{\varepsilon} * u_{\varepsilon},
\end{align}
with the regularized Riesz potential
\begin{align}\label{regularization3}
R_{\varepsilon}(x)=\frac{c_{d,s}}{\left(|x|^{2}+\varepsilon^{2}\right)^{\frac{d-2 s}{2}}},\nonumber
\end{align}
where $c_{d,s}=\frac{\Gamma(d/2-s)}{\pi^{d/2}4^s\Gamma(s)}$. Here $u_{0 \varepsilon}$ is a sequence of approximation for $u_{0}$ and can be constructed to satisfy that there exists $\varepsilon_{0}>0$ such that for any $0<\varepsilon<\varepsilon_{0}$,
\begin{align}
\left\{\begin{array}{l}
u_{0 \varepsilon} \geq 0,\left\|u_{\varepsilon}(x, 0)\right\|_{L^{1}\left(\mathbb{R}^{d}\right)}=\left\|u_{0}\right\|_{L^{1}\left(\mathbb{R}^{d}\right)}, \\
\int_{\mathbb{R}^{d}}|x|^{2} u_{0 \varepsilon} d x \rightarrow \int_{\mathbb{R}^{d}}|x|^{2} u_{0}(x) d x, \text { as } \varepsilon \rightarrow 0, \\
u_{0 \varepsilon}(x) \rightarrow u_{0}(x) \text { in } L^{q}\left(\mathbb{R}^{d}\right), \text { for } 1 \leq q<\infty, \text { as } \varepsilon \rightarrow 0 .
\end{array}\right.
\end{align}
This regularized problem has global in time smooth solutions for any  $\varepsilon>0$. This approximation has been proved to be convergent. More precisely, following the arguments in \citep[Theorem 4.2]{B14}, \citep[Lemma 4.8]{J19} and \citep[Section 4]{Y06} we assert that if
\begin{align}\label{regularization5}
\left\|u_{\varepsilon}(\cdot, t)\right\|_{L^{\infty}(\mathbb{R}^{d})}<C_{0},
\end{align}
where $C_{0}$ is independent of $\varepsilon>0$, then there exists a subsequence $\varepsilon_{n} \rightarrow 0$ such that
\begin{align}\label{regularization6}
u_{\varepsilon_{n}} \rightarrow u \text { in } L^{r}\left(0, T ; L^{r}\left(\mathbb{R}^{d}\right)\right), 1 \leq r<\infty
\end{align}
and $u$ is a weak solution to \eqref{PKS} on $[0, T)$.

According to the above analysis, a weak solution to \eqref{PKS} on $[0, T)$ exists when \eqref{regularization5} is fulfilled. So we shall focus on establishing the availability of the $L^\infty$-bound. As we will see in the following theorem where the local in time existence and blow-up criteria are constructed, such a bound follows from the $L^r$-norm for $r > 1$ which additionally provides a characterisation of the maximal existence time.

Before showing the local in time existence, we prepare the following HLS inequality which is stated in \cite{E01}.

\begin{lemma}[HLS inequality]\label{HLS}
    Let $q, q_{1}>1$, $d \geq 3$ and $0<\beta<d$ with $1 / q+1 / q_{1}+ \beta / d=2$. Assume $f \in L^{q}(\mathbb{R}^{d})$ and $g \in L^{q_{1}}(\mathbb{R}^{d})$. Then there exists a sharp constant $C_{HLS}$ independent of $f$ and $g$ such that
\begin{align}\label{HLS1}
\left|\iint_{\mathbb{R}^{d} \times \mathbb{R}^{d}} \frac{f(x) g(y)}{|x-y|^{\beta}} d x d y\right| \leq C(d,s,q)\|f\|_{L^{q}(\mathbb{R}^{d})}\|g\|_{L^{q_{1}}(\mathbb{R}^{d})} .
\end{align}
If $q=q_{1}=\frac{2 d}{2 d-\beta}$, then
\begin{align}\label{HLS2}
C(d,s,q)=C(d,s)=\pi^{\beta / 2} \frac{\Gamma(d / 2-\beta / 2)}{\Gamma(d-\beta / 2)}\left(\frac{\Gamma(d / 2)}{\Gamma(d)}\right)^{-1+\beta / d} .
\end{align}
In this case there is equality in \eqref{HLS1} if and only if $h \equiv(  const.)  f$ and
\begin{align}\label{HLS3}
f(x)=A\left(\gamma^{2}+\left|x-x_{0}\right|^{2}\right)^{-(2 d-\beta) / 2}
\end{align}
for some $A>0$, $\gamma \in \mathbb{R}$ and $x_{0} \in \mathbb{R}^{d}$. Particularly, for $\beta=d-2 s$,
\begin{align}\label{HLS4}
f=U_{s}(\lambda)\quad \text { for some } \lambda>0 \text {. }
\end{align}
\end{lemma}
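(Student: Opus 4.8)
The inequality \eqref{HLS1} with a finite (not necessarily sharp) constant is the classical part and follows from the weak Young inequality: since $|x|^{-\beta}\in L^{d/\beta,\infty}(\mathbb{R}^d)$ and the exponents obey $1/q+1/q_1+\beta/d=2$, Marcinkiewicz interpolation together with the $L^{p}$--$L^{p'}$ convolution estimate yields a bound of the form
\begin{align*}
\left|\iint_{\mathbb{R}^{d}\times\mathbb{R}^{d}}\frac{f(x)g(y)}{|x-y|^{\beta}}\,dx\,dy\right|\leq C\,\|f\|_{L^{q}(\mathbb{R}^d)}\|g\|_{L^{q_1}(\mathbb{R}^d)}.
\end{align*}
The substantive content is therefore the determination of the sharp constant \eqref{HLS2} and the classification of extremizers in the conformal (diagonal) case $q=q_1=2d/(2d-\beta)$, and this is where I would concentrate the work, following Lieb's approach.

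First I would reduce to the diagonal symmetric situation. Since $|x-y|^{-\beta}$ is a positive-definite kernel, a Cauchy--Schwarz argument shows that in the diagonal case the sharp constant for the bilinear form is attained when $g$ is a constant multiple of $f$, so it suffices to maximize the quadratic functional
\begin{align*}
I(f)=\iint_{\mathbb{R}^{d}\times\mathbb{R}^{d}}\frac{f(x)f(y)}{|x-y|^{\beta}}\,dx\,dy
\end{align*}
over the constraint $\|f\|_{L^{q}(\mathbb{R}^d)}=1$ with $q=2d/(2d-\beta)$. Next I would invoke the Riesz rearrangement inequality: replacing $f$ by its symmetric decreasing rearrangement $f^{*}$ does not decrease $I(f)$ while preserving the $L^{q}$-norm, so the supremum may be sought among nonnegative, radially symmetric, nonincreasing functions.

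The heart of the proof is the existence of a maximizer, and this is the step I expect to be the main obstacle, because both $I$ and the constraint are invariant under the full conformal group of $\mathbb{R}^d\cup\{\infty\}\cong S^d$ (seen by transferring the problem to the sphere via stereographic projection), a noncompact symmetry group. Naive weak-$L^{q}$ compactness therefore fails: a maximizing sequence may lose mass to infinity or concentrate at a point. To overcome this I would use the competing-symmetries / conformal-invariance method, alternating the symmetric decreasing rearrangement with a suitable conformal inversion, showing that the composed map does not decrease $I$ and drives any maximizing sequence to a distinguished radial fixed point, thereby producing an actual maximizer. Equivalently, one applies concentration-compactness modulo the conformal group and uses the conformal symmetry to re-center and re-scale, ruling out the vanishing and dichotomy scenarios.

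Once a maximizer $f$ is known to exist, I would write down its Euler--Lagrange equation,
\begin{align*}
\int_{\mathbb{R}^{d}}\frac{f(y)}{|x-y|^{\beta}}\,dy=\mu\,f(x)^{q-1},
\end{align*}
and classify its positive solutions. In the conformal case the power $q-1$ is precisely the critical one rendering this integral equation conformally covariant, so by the moving-plane method (or the conformal classification on $S^d$) every positive solution has the bubble form $f(x)=A(\gamma^{2}+|x-x_0|^{2})^{-(2d-\beta)/2}$, which is \eqref{HLS3}. Substituting this profile into $I(f)/\|f\|_{L^{q}}^{2}$ and evaluating the resulting Beta-type integrals produces the closed-form constant \eqref{HLS2}. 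Finally, specializing to $\beta=d-2s$ turns the extremal profile into $(\gamma^{2}+|x-x_0|^{2})^{-(d+2s)/2}$, that is, $f=U_{s}(\lambda)$ up to translation and scaling, giving \eqref{HLS4}.
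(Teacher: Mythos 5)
The paper does not prove this lemma at all: it is quoted verbatim as a classical result from Lieb--Loss \cite{E01}, so there is no internal proof to compare against. Your outline is a correct reconstruction of the standard argument for the sharp Hardy--Littlewood--Sobolev inequality, and it follows the established route in the literature rather than anything specific to this paper: weak Young/Marcinkiewicz interpolation for the non-sharp bound; positive-definiteness of the Riesz kernel plus Cauchy--Schwarz to reduce the diagonal case to the quadratic functional; Riesz rearrangement to restrict to radial decreasing competitors; conformal invariance (stereographic projection to $S^d$, competing symmetries, or concentration-compactness modulo the conformal group) to recover the compactness that naive weak convergence loses; and the classification of positive solutions of the conformally covariant Euler--Lagrange integral equation by the method of moving planes --- this last step is precisely the Chen--Li--Ou theorem, which happens to appear in the paper's own bibliography as \cite{W06}. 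The only caveat is that your proposal is a roadmap rather than a proof: the two genuinely hard steps (existence of a maximizer despite the noncompact conformal symmetry, and the rigidity/classification of extremizers, including the ``only if'' direction of the equality case) are correctly named and located but delegated to the cited techniques rather than executed. For a result of this depth that the paper itself only cites, that is a reasonable and accurate account; just be aware that writing those two steps out in full is where essentially all the work of Lieb's theorem lives. (Incidentally, the equality statement in the lemma as printed contains a typo --- the ``$h\equiv(\mathrm{const.})f$'' should read ``$g\equiv(\mathrm{const.})f$'' --- which your reduction via Cauchy--Schwarz implicitly corrects.)
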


\begin{theorem}[Local in time existence and blow-up criteria]\label{local}
Under assumption \eqref{initial} on the initial condition, there are a maximal existence time $T_{w} \in(0, \infty]$ and a weak solution $u$ to \eqref{PKS} on $\left[0, T_{w}\right)$.
\end{theorem}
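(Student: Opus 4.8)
The plan is to obtain the weak solution by passing to the limit $\varepsilon\to0$ in the regularized problem \eqref{regularization1}, whose solutions $u_\varepsilon$ are already known to be global and smooth. Since the convergence statement \eqref{regularization6} produces a weak solution of \eqref{PKS} in the sense of \eqref{weak} as soon as the uniform-in-$\varepsilon$ bound \eqref{regularization5} holds, the whole argument reduces to manufacturing such a bound on a nontrivial time interval $[0,T)$ and identifying the threshold time. Accordingly, I would \emph{define} the maximal existence time $T_w$ as the supremum of all $T$ for which an $\varepsilon$-independent $L^\infty$-bound on $[0,T]$ is available, so that the content of the theorem is exactly the positivity of this supremum together with the limit passage.

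First I would derive the fundamental $L^r$-estimate for $r>1$. Multiplying \eqref{regularization1} by $u_\varepsilon^{r-1}$ and integrating over $\mathbb{R}^d$, the two diffusion terms contribute the nonpositive quantities
\begin{align*}
&-\frac{4m(r-1)}{(m+r-1)^2}\int_{\mathbb{R}^d}\big|\nabla u_\varepsilon^{(m+r-1)/2}\big|^2\,dx\\
&\quad-\frac{4\varepsilon(r-1)}{r^2}\int_{\mathbb{R}^d}\big|\nabla u_\varepsilon^{r/2}\big|^2\,dx
\end{align*}
to $\tfrac{1}{r}\tfrac{d}{dt}\|u_\varepsilon\|_{L^r}^r$, while the aggregation term, after integration by parts and using $(-\Delta)^s\phi_\varepsilon\approx u_\varepsilon$ (so that $-\Delta\phi_\varepsilon$ is a Riesz potential of $u_\varepsilon$ with kernel exponent $d-2s+2$), becomes a double integral of the form $\iint u_\varepsilon^{r}(x)\,u_\varepsilon(y)\,|x-y|^{-(d-2s+2)}\,dx\,dy$. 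This is precisely where \Cref{HLS} applies, with $\beta=d-2s+2\in(0,d)$ exactly because $s>1$; combined with mass conservation \eqref{Mass} and a Gagliardo--Nirenberg--Sobolev interpolation against the good gradient term, it closes into the key differential inequality \eqref{est2}, schematically $\tfrac{d}{dt}\|u_\varepsilon\|_{L^r}^r\le C\,\|u_\varepsilon\|_{L^r}^{r(1+\theta)}$ with an exponent $\theta$ carrying the factor $2s-2$.

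From \eqref{est2} the maximal time emerges by scalar comparison: since $\theta>0$ for $s>1$, the ODE $y'=Cy^{1+\theta}$ has a finite blow-up time depending only on $\|u_0\|_{L^r}$ and on the ($\varepsilon$-independent) constant $C$, so $\|u_\varepsilon(\cdot,t)\|_{L^r}$ stays uniformly bounded on every $[0,T]$ with $T<T_w$, where $T_w$ is this threshold. A Moser-type iteration in $r\to\infty$ (equivalently an $L^r$-to-$L^\infty$ bootstrap) then upgrades the uniform $L^r$-bound to the uniform $L^\infty$-bound \eqref{regularization5}, which in turn triggers \eqref{regularization6}; extracting a subsequence $\varepsilon_n\to0$ yields the weak solution on $[0,T_w)$.

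The main obstacle is keeping all constants independent of $\varepsilon$: the regularized kernel $R_\varepsilon$ and potential $\phi_\varepsilon=R_\varepsilon*u_\varepsilon$ must be dominated by the genuine Riesz potential uniformly in $\varepsilon$, so that the HLS and interpolation constants entering \eqref{est2} do not degenerate as $\varepsilon\to0$. A secondary technical point is the passage to the limit in the nonlocal double-integral term of \eqref{weak}, for which the strong convergence \eqref{regularization6} together with the uniform bounds justifies convergence of the symmetrised singular integral; here the second-moment control in \eqref{initial} is what prevents mass from escaping to infinity and thereby secures the identification of the limit as a genuine weak solution.
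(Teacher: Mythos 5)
Your proposal is correct and follows essentially the same route as the paper: multiplying the regularized equation \eqref{regularization1} by a power of $u_\varepsilon$, controlling the resulting double integral with kernel exponent $d+2-2s$ via the HLS inequality (using $s>1$), interpolating with Gagliardo--Nirenberg and absorbing into the gradient term to close a superlinear ODE for $\|u_\varepsilon\|_{L^r}^r$, then upgrading to the uniform $L^\infty$-bound \eqref{regularization5} and passing to the limit $\varepsilon\to 0$. The only cosmetic difference is that the paper interpolates against the $L^r$-norm rather than the mass and delegates the $L^r$-to-$L^\infty$ step and the characterisation of $T_w$ to cited references rather than an explicit Moser iteration.
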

\begin{proof}
To prove this result we need to refine the arguments used in the proof of \citep[Theorem 2.11]{B13}. We follow a procedure analogous to the ones in \citep[Lemma 2.3]{A09}. It’s obtained by multiplying equation \eqref{regularization1} with $ru^{r-1}$ that
\begin{align}\label{est1}
& \frac{d}{d t} \int_{{R}^{d}} u_{\varepsilon}^{r} d x+\frac{4 m r(r-1)}{(m+r-1)^{2}} \int_{{R}^{d}}\left|\nabla u_{\varepsilon}^{\frac{m+r-1}{2}}\right|^{2} d x+\varepsilon \frac{4(r-1)}{r} \int_{{R}^{d}}\left|\nabla u_{\varepsilon}^{r / 2}\right|^{2} d x \nonumber\\
= & (2 s-2)(r-1) \iint_{{R}^{d} \times {R}^{d}} \frac{u_{\varepsilon}^{r}(x) u_{\varepsilon}(y)}{\left(|x-y|^{2}+\varepsilon^{2}\right)^{\frac{d+2-2 s}{2}}} d x d y.
\end{align}
Due to $2s > 2$ and $r>1$, we treat the right hand side of \eqref{est1} by applying the HLS inequality \eqref{HLS1} with $f=u_\varepsilon^r(x)$ and $g=u_\varepsilon(y)$ such that
\begin{align}\label{est2}
\iint_{\mathbb{R}^{d} \times \mathbb{R}^{d}} \frac{u_{\varepsilon}^{r}(x) u_{\varepsilon}(y)}{\left(|x-y|^{2}+\varepsilon^{2}\right)^{\frac{d+2-2 s}{2}}} d x d y & \leq \iint_{\mathbb{R}^{d} \times \mathbb{R}^{d}} \frac{u_{\varepsilon}^{r}(x) u_{\varepsilon}(y)}{|x-y|^{d+2-2 s}} d x d y \nonumber\\
& \leq C_{HLS}\left\|u_{\varepsilon}^{r}\right\|_{L^{q}(\mathbb{R}^{d})}\left\|u_{\varepsilon}\right\|_{L^{q_{1}}(\mathbb{R}^{d})} \nonumber\\
& =C_{HLS}\left\|u_{\varepsilon}\right\|_{L^{\frac{r+1}{1+(2 s-2) / d}}(\mathbb{R}^{d})}^{r+1},
\end{align}
where $\frac{1}{q}+\frac{1}{q_1}+\frac{d+2-2s}{d}=2$ and we have used $q_1=qr=\frac{d(r+1)}{d+2s-2}$, $C_{HLS}$ is a bounded constant depending on $d,~s,~r$. Furthermore, for $1 \leq k_1<k_2<k_3=\frac{2 d}{d-2}$ and $\frac{1}{k_2}=\frac{\theta}{k_1}+\frac{1-\theta}{k_3}$,
we have the following GNS inequality \cite{E01}: there exists a positive constant $C_{GNS}$ such that
\begin{align}\label{est3}
\|v\|_{L^{k_2}(\mathbb{R}^d)}\leq C_{GNS}\|v\|_{L^{k_1}(\mathbb{R}^d)}^{\theta}\|\nabla v\|_{L^2(\mathbb{R}^d)}^{1-\theta},\quad \theta=\left(\frac{1}{k_2}-\frac{1}{k_3}\right)\left(\frac{1}{k_1}-\frac{1}{k_3}\right)^{-1},
\end{align}
which we apply with $v=u_{\varepsilon}^{\frac{m+r-1}{2}}$ to discover
\begin{align}\label{est4}
\|u_{\varepsilon}\|_{L^{\frac{r+1}{1+(2s-2)/d}}(\mathbb{R}^d)}^{r+1}\leq C_{GNS}\|u_{\varepsilon}\|_{L^{k_1\frac{m+r-1}{2}}(\mathbb{R}^d)}^{a\theta\frac{m+r-1}{2}}\|\nabla u_{\varepsilon}^{\frac{m+r-1}{2}}\|_{L^2(\mathbb{R}^d)}^{a(1-\theta)},
\end{align}
where the parameters satisfy
\begin{align}\label{est5}
 a\frac{m+r-1}{2}=r+1,\quad k_2\frac{m+r-1}{2}=\frac{r+1}{1+(2s-2)/d}.
\end{align}
Here we pick
\begin{align}\label{est6}
1=\frac{d(2-m)}{2s}<k_1\frac{m+r-1}{2}=r.
\end{align}
Then simple computations show that
\begin{align}\label{est7}
a(1-\theta)=2+\frac{\frac{-2m+4}{m+r-1}\frac{1}{k_1}-\frac{2s}{d}}{\left(\frac{1}{k_1}-\frac{d-2}{2d}\right)}<2
\end{align}
in case of $1 < m = 2- 2s/d$. By Young’s inequality, from \eqref{est4} we proceed to obtain
\begin{align}\label{est8}
  \|u_{\varepsilon}\|_{L^{\frac{r+1}{1+(2s-2)/d}}(\mathbb{R}^d)}^{r+1} \leq C_1\|\nabla u_{\varepsilon}^{\frac{m+r-1}{2}}\|_{L^2(\mathbb{R}^d)}^2+C_1^{-p/q}\left(\|u_{\varepsilon}\|_{L^{r}(\mathbb{R}^d)}^{a\theta\frac{m+r-1}{2}}\right)^{\frac{2}{2-a(1-\theta)}},
\end{align}
where the constant $C_1$ is defined as in \eqref{HLS2}. \eqref{est8} can be read as
\begin{align}\label{est9}
(2 s-2) C_{HLS}(r-1)\left\|u_{\varepsilon}\right\|_{L^{\frac{r+1}{1+(2 s-2) / d} }(\mathbb{R}^{d})}^{r+1} \leq & \frac{2 m r(r-1)}{(m+r-1)^{2}}\left\|\nabla u_{\varepsilon}^{\frac{m+r-1}{2}}\right\|_{L^{2}(\mathbb{R}^{d})}^{2} \nonumber\\
& +(2 s-2) C_{HLS}(r-1)C_1^{-p/q}\left(\left\|u_{\varepsilon}\right\|_{L^{r}(\mathbb{R}^{d})}^{r}\right)^{1+\frac{\frac{2 s}{d}-(2-m) \frac{2 s-2}{d}}{\frac{2 s}{d} r-(2-m)}}
\end{align}
for $r>\frac{d(2-m)}{2 s}=1$ due to $m=2-\frac{2 s}{d}$. Therefore, substituting \eqref{est2} and \eqref{est9} into \eqref{est1} we thus end up with
\begin{align}\label{est10}
& \frac{d}{d t}\left\|u_{\varepsilon}\right\|_{L^{r}(\mathbb{R}^{d})}^{r}+\frac{2 m r(r-1)}{(m+r-1)^{2}} \int_{\mathbb{R}^{d}}\left|\nabla u_{\varepsilon}^{\frac{m+r-1}{2}}\right|^{2} d x+\varepsilon \frac{4(r-1)}{r} \int_{\mathbb{R}^{d}}\left|\nabla u_{\varepsilon}^{r / 2}\right|^{2} d x \\
\leq&C\left(\left\|u_{\varepsilon}\right\|_{L^{r}(\mathbb{R}^{d})}^{r}\right)^{1+\frac{\frac{2 s}{d}-(2-m)\frac{2 s-2}{d}}{\frac{2 s}{d} r-(2-m)}}.
\end{align}
Hence the local in time $L^{r}$ -estimates are followed
\begin{align}\label{est22}
\left\|u_{\varepsilon}(\cdot, t)\right\|_{L^{r}(\mathbb{R}^{d})}^{r} & \leq \frac{1}{\left(\left\|u_{\varepsilon}(0)\right\|_{L^{r}(\mathbb{R}^{d})}^{-r \delta}-\delta C t\right)^{\frac{1}{\delta}}} \nonumber\\
& =\left(\frac{C(\delta)}{T_{0}-t}\right)^{\frac{1}{\delta}}, \quad T_{0}=\frac{\left\|u_{\varepsilon}(0)\right\|_{L^{r}(\mathbb{R}^{d})}^{-r \delta}}{\delta C},
\end{align}
where $\delta=\frac{1-\frac{(2-m)(2 s-2)}{2 s}}{r-\frac{d(2-m)}{2 s}}>0$ for $r>\frac{d(2-m)}{2 s}=1$ because of $m=2-\frac{2 s}{d}$. Moreover, coming back to \eqref{regularization6}, we further deduce that
\begin{align}\label{est21}
\left\|\nabla u_{\varepsilon}^{\frac{m+r-1}{2}}\right\|_{L^{2}(0, ~T_{0} ;~ L^{2}(\mathbb{R}^{d}))} \leq C\left(\left\|u_{\varepsilon}(0)\right\|_{L^{r}(\mathbb{R}^{d})}\right) .
\end{align}
Particularly, taking $r=m$ in \eqref{est22} leads to
\begin{align}\label{est23}
\left\|u_{\varepsilon}(\cdot, t)\right\|_{L^{m}(\mathbb{R}^{d})} =\left(\frac{C(\delta)}{T_{0}-t}\right)^{\frac{1}{m\delta}}, \quad T_{0}=\frac{\left\|u_{\varepsilon}(0)\right\|_{L^{r}(\mathbb{R}^{d})}^{-r \delta}}{\delta C}.
\end{align}
Thus following similar arguments of \citep[Section 4]{Y06} with the aid of regularities \eqref{est21} and \eqref{est23}, we deduce that there exists a weak solution $u$ of \eqref{PKS} by passing to the limit $u_\varepsilon \to u$ as $\varepsilon\to0$ (without relabeling).
Finally, a direct consequence of \citep[Theorem 2.4]{A09} characterises the maximal existence time of the weak solution and completes the proof.
\end{proof}


\section{A variant of the HLS inequality} \label{globalfinite}

As we have just seen in the existence proof, the existence time of a weak solution to \eqref{PKS} heavily depends on the behaviour of its $L^m$-norm. As the free energy $F$ involves the $L^m$-norm, the information given by $F$ will be of paramount importance. Let us then proceed to a deeper study of this functional.

In this subsection, we will show the existence of the extremal function for the modified HLS inequality which is stated as \Cref{HLS}. To this end, we first define
\begin{align}\label{omega}
\omega(u):=\iint_{R^{d} \times R^{d}} \frac{u(x) u(y)}{|x-y|^{d-2s}} d x d y ,
\end{align}
and
\begin{align}\label{J}
J(u):=\frac{\omega(u)}{\|u\|_{L^1(\mathbb{R}^{d})}^{2s / d}\|u\|_{L^m(\mathbb{R}^{d})}^{m}},\quad u\in {L}^{1}(\mathbb{R}^{d}) \cap {L}^{m}(\mathbb{R}^{d}).
\end{align}
We now sketch to establish a variant of the HLS(VHLS) inequality.
\begin{proposition}[VHLS inequality]\label{VHLS}
Let $m=2-2s/d$. Let $u\in L^{1}(\mathbb{R}^{d}) \cap L^{m}(\mathbb{R}^{d})$. Then there exists an optimal constant $C_*$ such that
\begin{align}\label{VHLS1}
J(u)\leq C_*.
\end{align}
The optimal constant satisfies
\begin{align}\label{VHLS2}
C_*=\pi^{(d-2s) / 2} \frac{\Gamma(s)}{\Gamma((d+2s) / 2)}\left(\frac{\Gamma(d / 2)}{\Gamma(d)}\right)^{-2s / d} .
\end{align}
The equality holds in \eqref{VHLS1} if $u=\lambda U(\mu |x-x_0|)$ for any real numbers $\lambda>0$, $\mu>0$ and $x_0\in \mathbb{R}^d$. $U$ is a radially symmetric, non-increasing and compactly supported function.
\end{proposition}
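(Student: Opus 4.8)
The plan is to derive \eqref{VHLS1} from the sharp diagonal Hardy--Littlewood--Sobolev inequality of \Cref{HLS} by inserting a single interpolation step, and then to recover optimality and the extremals through a direct variational argument. First I would apply \eqref{HLS1}--\eqref{HLS2} with $\beta=d-2s$ in the balanced case $q=q_1=\frac{2d}{2d-\beta}=\frac{2d}{d+2s}=:p_0$, obtaining
\begin{align}\label{planHLS}
\omega(u)\le C(d,s)\,\|u\|_{L^{p_0}(\mathbb{R}^d)}^2 .
\end{align}
Substituting $\beta=d-2s$ into \eqref{HLS2} and simplifying the Gamma factors (so that $d/2-\beta/2=s$, $d-\beta/2=(d+2s)/2$, and $-1+\beta/d=-2s/d$) reproduces exactly the constant displayed in \eqref{VHLS2}; this is the candidate value of $C_*$.

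Next I would exploit that $2<2s<d$ forces $1<p_0<m$, so $L^{p_0}$ interpolates between $L^1$ and $L^m$. A direct check shows $\tfrac1{p_0}=(1-\theta)+\tfrac{\theta}{m}$ with $\theta=\tfrac{d-s}{d}$ and $1-\theta=\tfrac sd$, whence Hölder gives $\|u\|_{L^{p_0}}\le\|u\|_{L^1}^{s/d}\|u\|_{L^m}^{(d-s)/d}$ and, after squaring and using $2(d-s)/d=m$,
\begin{align}\label{planinterp}
\|u\|_{L^{p_0}(\mathbb{R}^d)}^2\le\|u\|_{L^1(\mathbb{R}^d)}^{2s/d}\,\|u\|_{L^m(\mathbb{R}^d)}^{m}.
\end{align}
Chaining \eqref{planHLS} and \eqref{planinterp} yields $\omega(u)\le C_*\,\|u\|_{L^1}^{2s/d}\|u\|_{L^m}^m$, that is $J(u)\le C_*$, which proves \eqref{VHLS1} and pins down the constant. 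The remaining, and genuinely delicate, point is optimality together with the shape of the equality cases.

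For optimality I would prove that the supremum of $J$ is attained. Since $J$ is invariant under $u\mapsto c\,u$ and $u\mapsto u(\lambda\,\cdot)$, I normalise a maximising sequence by $\|u_n\|_{L^1}=\|u_n\|_{L^m}=1$; replacing each $u_n$ by its symmetric decreasing rearrangement increases $\omega$ by the Riesz rearrangement inequality (the kernel $|x-y|^{-(d-2s)}$ is radially decreasing) and leaves both norms unchanged, so I may take the $u_n$ radial and non-increasing. A concentration-compactness argument then extracts a radial non-increasing maximiser $U$. Writing the Euler--Lagrange equation for $J$ at $U$ states precisely that the chemical potential of \eqref{free2} is constant on the support,
\begin{align}\label{planEL}
\tfrac{m}{m-1}\,U^{m-1}-\phi_U=\text{const}\quad\text{on }\{U>0\},\qquad \phi_U=c_{d,s}\,U*|x|^{-(d-2s)},
\end{align}
which is the stationary equation associated with \eqref{PKS}; for the critical exponent $m=2-2s/d$ its nonnegative solutions are compactly supported and radially non-increasing, giving the asserted form $u=\lambda\,U(\mu|x-x_0|)$.

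The hard part is this variational stage, and in particular the compatibility of the explicit value with the shape of the extremals. The two links in the chain are saturated by mutually incompatible profiles: equality in \eqref{HLS1} forces the non-compactly supported profile \eqref{HLS3}, whereas equality in \eqref{planinterp} forces $u$ to be a multiple of a characteristic function. A compactly supported $U$ therefore loses strictly in both steps, so verifying that $J(U)$ equals the value \eqref{VHLS2} coming from the chain --- equivalently, that $C_*$ is at once equal to $C(d,s)$ and attained by a compactly supported function --- is the most delicate point. This is where I would expect the argument to demand the most care: a careful concentration-compactness analysis ruling out vanishing and dichotomy of the maximising sequence under the scale-critical normalisation, and a separate, self-contained identification of the constant rather than a naive simultaneous saturation of \eqref{planHLS} and \eqref{planinterp}.
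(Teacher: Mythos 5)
Your derivation of the inequality itself is exactly the paper's Step 1: apply \Cref{HLS} with $\beta=d-2s$ and $q=q_1=2d/(d+2s)$, then interpolate $L^{2d/(d+2s)}$ between $L^1$ and $L^m$ by H\"older, which yields $\omega(u)\le C(d,s)\|u\|_{L^1}^{2s/d}\|u\|_{L^m}^m$ with $C(d,s)$ the constant displayed in \eqref{VHLS2}. Where you diverge is the attainment of the supremum: you invoke concentration--compactness, whereas the paper takes a more elementary route. After normalising $\|u_j\|_{L^1}=\|u_j\|_{L^m}=1$ and symmetrising, the monotonicity of the rearranged $u_j$ gives the uniform pointwise envelope $u_j(R)\le C\min\{R^{-d/m},R^{-d}\}$ of \eqref{C*12}; this envelope lies in $L^{2d/(d+2s)}$, so Helly's selection principle produces a pointwise limit $U$ and dominated convergence passes $\omega(u_j)\to\omega(U)$, while Fatou controls the norms from below. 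This sidesteps the vanishing/dichotomy analysis entirely, and is the cleaner argument here since the normalisation already kills the scaling degeneracy. Your subsequent Euler--Lagrange identification of the extremal as a compactly supported stationary profile is carried out by the paper in a separate proposition, not inside this proof.

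The point you flag as the ``most delicate'' is in fact a genuine gap --- but it is a gap shared by the paper, not one you introduced. As you correctly observe, equality in \eqref{HLS1} forces the profile \eqref{HLS3}, which is not compactly supported, while equality in the H\"older interpolation forces a multiple of a characteristic function; no single $u$ saturates both, so the chained argument can only give the strict upper bound $C_*<C(d,s)$. The paper's own Step 1 concludes merely that ``$C_*$ is finite and bounded from above by $C(d,s)$'' and never proves the reverse inequality, yet the statement \eqref{VHLS2} asserts equality. Neither your proposal nor the paper actually establishes the displayed value of $C_*$; what both arguments genuinely prove is that a finite optimal constant exists, that it is attained by a radially non-increasing $U$, and that $C_*\le C(d,s)$. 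For the dichotomy results of Section 4 this suffices, since only the existence and attainment of $C_*$ (through the definition of $M_*$ in \eqref{M_*}) are used, but you are right that identifying the constant would require a separate, self-contained computation on the actual extremal rather than saturation of the chain.
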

\begin{proof}
{\it\textbf{Step 1}} (Existence of $C_*$)\quad
 Consider $u \in {L}^{1}(\mathbb{R}^{d}) \cap {L}^{m}(\mathbb{R}^{d})$. Applying the HLS inequality \eqref{HLS1} with  $q=q_1=2 d /(d+2s)$  and  $\beta=d-2s$, and then the H\"{o}lder inequality with $1<   2 d /(d+2s)<2-2s/d=m$, we obtain
\begin{align}\label{VHLS3}
\left|\iint_{{R}^{d} \times {{R}^{d}}} \frac{u(x, t) u(y, t)}{|x-y|^{d-2 s}} d x d y\right|\leq C(d,s)\|u\|_{L^{\frac{2d}{d+2s}}(\mathbb{R}^d)}^2
\leq C(d,s)\|u\|_{L^1(\mathbb{R}^d)}^{\frac{2s}{d}}\|u\|_{L^m(\mathbb{R}^d)}^m,
\end{align}
where
\begin{align}\label{VHLS4}
C(d,s)=\pi^{(d-2s) / 2} \frac{\Gamma(s)}{\Gamma((d+2s) / 2)}\left(\frac{\Gamma(d / 2)}{\Gamma(d)}\right)^{-2s / d} .
\end{align}
Consequently, $C_*$ is finite and bounded from above by $C(d,s)$.

{\it\textbf{Step 2}} (Existence of maximisers)\quad
Consider a maximising sequence $(u_j)_j$ in ${L}^{1}(\mathbb{R}^{d}) \cap \mathrm{L}^{m}(\mathbb{R}^{d})$, that is
\begin{align}\label{C*2}
\lim\limits_{j\to \infty}J(u_j)=C_*\text{.}
\end{align}
    We first prove that we may assume that $u_j$ is a non-negative, radially symmetric, non-increasing function such that $\|u_j\|_{L^1(\mathbb{R}^d)}=\|u_j\|_{L^m(\mathbb{R}^d)}=1$ for any $j\geq 0$. Indeed $J(u_j)\leq J(|u_j|)$ so that $(|u_j|)_j$ is also a maximising sequence. Next, denoting by $u_j^{*}$ the symmetric decreasing rearrangement of $|u_j|$, we infer from the Riesz rearrangement properties \citep[Theorem 3.7]{E01} that
\begin{align}
\|u_j^{*}\|_{L^1(\mathbb{R}^d)}= \|u_j\|_{L^1(\mathbb{R}^d)},\quad\|u_j^{*}\|_{L^m(\mathbb{R}^d)}= \|u_j\|_{L^m(\mathbb{R}^d)},\quad\omega(u_j^*)\geq\omega(u_j).
\end{align}
 Thus
\begin{align}\label{C*10}
J(u_j^{*}) \geq J(u_j).
\end{align}
Consequently, $\left(u_j^{*}\right)_j$ is also a maximising sequence and the first step is proved.

We now establish the existence of the supremum. The monotonicity and the non-negativity of $u_j$ imply that
\begin{align}\label{C*11}
\|u_j\|_{L^m(\mathbb{R}^d)}^m=d\alpha_d\int_0^{\infty} r^{d-1}u_j(r)dr\geq \alpha_d R^d\left(u_j(R)\right)^m,
\end{align}
and
\begin{align}\label{C*111}
\|u_j\|_{L^1(\mathbb{R}^d)}\geq \alpha_d R^d\left(u_j(R)\right).
\end{align}
So that
\begin{align}\label{C*12}
0\leq u_j(R)\leq G(R):=C_2 \text{inf}\{R^{-d/m};~R^{-d} \}
\end{align}

 Now, we use once more the monotonicity of the $u_j$’s and their boundedness in $(R, \infty)$ for any $R > 0$ to deduce from Helly's selection principle \cite[P89]{E01} that there are a sub-sequence of $(u _j)_j$ (not relabelled) and a non-negative and non-increasing function $U$ such that $(u_j)_j$ converges to $U$ point-wisely. In addition, as $1<2d/(d+2s)<m$, $\|G(|x|)\|_{L^{2d/(d+2s)}(\mathbb{R}^d)}<\infty$ while the HLS inequality \eqref{HLS1} warrants that $\omega(G(|x|))<\infty$. Together with \eqref{C*12} and the point-wise convergence of $(u_j)_j$, this implies that
\begin{align}\label{C*14}
\lim\limits_{i\to \infty}\omega(u_j)=\omega(U)
\end{align}
by the Lebesgue dominated convergence theorem. In addition, the point-wise convergence of $(u_j)_j$ and Fatou’s lemma ensure
\begin{align}
\|U\|_{L^1(\mathbb{R}^d)}\leq \|u_j\|_{L^1(\mathbb{R}^d)} \text{,} \quad\|U\|_{L^m(\mathbb{R}^d)}\leq \|u_j\|_{L^m(\mathbb{R}^d)}.
\end{align}
Thus
\begin{align}
J(U)=\frac{\omega(U)}{\|U\|_{L^1(\mathbb{R}^{d})}^{2s / d}\|U\|_{L^m(\mathbb{R}^{d})}^{m}}\geq \lim\limits_{j\to \infty}\frac{\omega(u_j)}{\|u_j\|_{L^1(\mathbb{R}^{d})}^{2s / d}\|u_j\|_{L^m(\mathbb{R}^{d})}^{m}} =\lim\limits_{j\to \infty}\omega(u_j)=C_*
\end{align}
and using \eqref{VHLS1} we conclude that
\begin{align}\label{UC}
    J(U)=C_*.
\end{align}
Finally, let us point out that $J(U)=J(\tilde{U})$ where $\tilde{U}=\lambda U(\mu |x-x_0|))$ for $\forall \lambda,~\mu>0$. This completes the proof.
\end{proof}

We are now in a position to begin the study of the identification of the maximisers. To this end, let us define the critical mass $M_{*}$ by
\begin{align}\label{M_*}
M_{*}:=\left[\frac{2}{(m-1) C_{*} c_{d,s}}\right]^{d / 2s} .
\end{align}
and denote
\begin{align}\label{M_*1}
{H}_{M_*}:=\left\{u \in {L}^{1}(\mathbb{R}^{d}) \cap {L}^{m}(\mathbb{R}^{d}):\|u\|_{L^1(\mathbb{R}^d)}=M_*\right\}.
\end{align}

\begin{proposition}[Euler-Lagrange equation]\label{C*}
Let $U$ be a maximiser of $J$ in $H_{M_*}$. Then $U$ is a radially symmetric, non-increasing and compactly supported stationary solution to \eqref{PKS} solving the following Euler-Lagrange equation
\begin{align}\label{EL2}
        \frac{m}{m-1}U^{m-1}-c_{d,s}\int_{\mathbb{R}^d}\frac{U(y)}{|x-y|^{d-2s}}dy=\frac{1}{M_*}\frac{2s}{2s-d}\|U\|_{L^m(\mathbb{R}^d)}^{m},~\text{a.e. in}~ supp(U).
    \end{align}
\end{proposition}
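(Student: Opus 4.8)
The plan is to obtain \eqref{EL2} as the first-order optimality condition for the scale-invariant functional $J$. Because $J$ is invariant under $u \mapsto \lambda\, u(\mu(\cdot-x_0))$ and $H_{M_*}$ meets every scaling orbit, maximising over $H_{M_*}$ is the same as maximising over all of $L^1(\mathbb{R}^d)\cap L^m(\mathbb{R}^d)$; by \Cref{VHLS} the maximiser $U$ therefore realises $J(U)=C_*$ and is an \emph{unconstrained} critical point of the scale-invariant $J$. Consequently no Lagrange multiplier is needed: it suffices to impose that the Gâteaux derivative of $J$ at $U$ vanishes. I would test with $\varphi$ supported in the interior of $\mathrm{supp}(U)$, where $U$ is bounded below on compact subsets, so that $U+\varepsilon\varphi\ge 0$ stays admissible for $|\varepsilon|$ small, and then differentiate $\varepsilon\mapsto J(U+\varepsilon\varphi)$ at $\varepsilon=0$.

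Writing $A:=\omega(U)$, $B:=\|U\|_{L^1(\mathbb{R}^d)}=M_*$ and $D:=\|U\|_{L^m(\mathbb{R}^d)}^m$, the three elementary variations are
\[
\delta\omega(U)[\varphi]=2\int_{\mathbb{R}^d}\varphi(x)\int_{\mathbb{R}^d}\frac{U(y)}{|x-y|^{d-2s}}\,dy\,dx,\qquad \delta\|U\|_{L^1}[\varphi]=\int_{\mathbb{R}^d}\varphi,\qquad \delta\|U\|_{L^m}^m[\varphi]=m\int_{\mathbb{R}^d}U^{m-1}\varphi.
\]
Since $J=A\,B^{-2s/d}D^{-1}>0$, the condition $\delta J=0$ is equivalent to $\delta A/A-\tfrac{2s}{d}\,\delta B/B-\delta D/D=0$. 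As $\varphi$ is arbitrary, this yields the pointwise identity, valid a.e.\ on $\mathrm{supp}(U)$,
\[
\frac{2}{A}\int_{\mathbb{R}^d}\frac{U(y)}{|x-y|^{d-2s}}\,dy=\frac{2s}{d\,B}+\frac{m}{D}\,U^{m-1}.
\]

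The remaining step is purely algebraic: I replace $A$ by an explicit multiple of $D$ using the value of $M_*$. From $J(U)=C_*$ and $B=M_*$ one has $A=C_*M_*^{2s/d}D$, while the definition $M_*^{2s/d}=2/[(m-1)C_*c_{d,s}]$ gives $C_*M_*^{2s/d}=2/[(m-1)c_{d,s}]$, hence $A=\tfrac{2D}{(m-1)c_{d,s}}$. Substituting this into the displayed identity and multiplying through by $D/(m-1)$ turns it into
\[
\frac{m}{m-1}U^{m-1}-c_{d,s}\int_{\mathbb{R}^d}\frac{U(y)}{|x-y|^{d-2s}}\,dy=-\frac{2s}{d(m-1)}\,\frac{\|U\|_{L^m}^m}{M_*}.
\]
Because $m=2-2s/d$ forces $d(m-1)=d-2s$, the coefficient $-2s/[d(m-1)]$ equals $2s/(2s-d)$, which is exactly \eqref{EL2}. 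The stationary-solution interpretation is then immediate: the left-hand side is the chemical potential $\mu=\frac{m}{m-1}U^{m-1}-\phi$ of \eqref{free2}, so $\mu$ is constant on $\mathrm{supp}(U)$ and $\nabla\cdot(U\nabla\mu)=0$ there.

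I expect the main obstacle to be the rigorous justification of the variation rather than the algebra. One must check that $U+\varepsilon\varphi$ remains nonnegative and in $L^1\cap L^m$, that $\varepsilon\mapsto\omega(U+\varepsilon\varphi)$ and $\varepsilon\mapsto\|U+\varepsilon\varphi\|_{L^m}^m$ are genuinely differentiable with the stated derivatives (here radial monotonicity of $U$, inherited from \Cref{VHLS}, makes the Riesz potential $\int U(y)|x-y|^{-(d-2s)}\,dy$ finite and continuous), and that the integral identity holding for all admissible $\varphi$ upgrades to the a.e.\ pointwise statement on the support. Compact support then follows a posteriori: since $2s<d$ makes the additive constant on the right-hand side strictly negative while the Riesz potential of $U$ is radially nonincreasing and tends to $0$ at infinity, the set on which $\tfrac{m}{m-1}U^{m-1}>0$ is confined to a bounded ball, so $U$ vanishes outside it.
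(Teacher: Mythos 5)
Your proposal is correct and follows essentially the same route as the paper: both compute the first variation of $J$ at $U$ against perturbations supported in the support of $U$, and then use $J(U)=C_*$ together with the definition of $M_*$ to evaluate the constant, arriving at the same Euler--Lagrange identity. The only difference is bookkeeping — the paper restricts to mass-preserving perturbations $\varphi=\psi-\frac{U}{M_*}\int_\Omega\psi\,dx$ so that the $L^1$-variation term drops out, whereas you keep general $\varphi$ and let the factor $\|u\|_{L^1(\mathbb{R}^d)}^{-2s/d}$ in $J$ absorb the mass constraint via scale invariance — and both computations yield \eqref{EL2}.
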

\begin{proof}
Let us describe the set of maximisers of $J$ in $H_{M_*}$. Let us define $\Omega=supp(U)$. Let $\forall \psi\in C_0^\infty(\Omega)$, we define
   \begin{align}\label{EL4}
        \varphi(x)=\psi(x)-\frac{U}{M_*}\int_{\Omega}\psi(x)dx,
    \end{align}
then $supp(\varphi) \subseteq \Omega$ and $\int_{\Omega} \varphi d x=0$. Besides, there exists
\begin{align}\label{EL5}
\varepsilon_{0}:=\frac{\min\limits_{y \in supp \varphi} U(y)}{\max\limits_{y \in supp \varphi}|\varphi(y)|}>0
\end{align}
such that $U+\varepsilon \varphi \geq 0$ in $\Omega$ for $0<\varepsilon<\varepsilon_{0}$. Then a tedious computation shows that
\begin{align}\label{EL6}
&\left.\frac{d}{d \varepsilon}\right|_{\varepsilon=0} J\left(U+\varepsilon \varphi\right)\nonumber\\
=&-\frac{2}{c_{d,s}\|U\|_{L^1(\mathbb{R}^d)}^{2s/d}\|U\|_{L^m(\mathbb{R}^d)}^m}\int_{\mathbb{R}^d}\left(\frac{m}{m-1}U^{m-1}-c_{d,s}\int_{\mathbb{R}^d}\frac{U(y)}{|x-y|^{d-2s}}dy\right)\varphi(x)dx\nonumber\\
&-\frac{\frac{2s}{d}\int_{\mathbb{R}^d}\varphi dx}{\|U\|_{L^1(\mathbb{R}^d)}^{2s/d+1}\|U\|_{L^m(\mathbb{R}^d)}^m}\omega(U)=0
\end{align}
Then by the definition of $\varphi$, \eqref{EL6} can be read as
\begin{align}\label{EL8}
\int_{\Omega}\left(\mathcal{M}-\frac{1}{M_*} \int_{\Omega} \mathcal{M} U d x\right) \psi(x) d x=0, \text { for any } \psi \in C_{0}^{\infty}(\Omega).
\end{align}
Here $\mathcal{M}$ is defined as
\begin{align}\label{EL9}
\mathcal{M}=\frac{m}{m-1} U^{m-1}-c_{d,s}\int_{\mathbb{R}^d}\frac{U(y)}{|x-y|^{d-2s}}dy.
\end{align}
Therefore we discover
\begin{align}\label{EL10}
\frac{m}{m-1} U^{m-1}-c_{d,s}\int_{\mathbb{R}^d}\frac{U(y)}{|x-y|^{d-2s}}dy=\frac{1}{M_*} \left(\int_{\Omega} \frac{m}{m-1} U^{m} d x-c_{d,s}\omega(U)\right), \text { a.e. in } \Omega \text {. }
\end{align}
Taking $J(U)=C_*$ which is stated in \eqref{UC} into account, we arrive at
\begin{align}\label{EL11}
   \frac{m}{m-1}U^{m-1} -c_{d,s}\int_{\mathbb{R}^d}\frac{U(y)}{|x-y|^{d-2s}}dy&=\frac{1}{M_*}\left(\frac{m}{m-1}-\frac{2}{m-1}\right)\int_{\mathbb{R}^d}U^mdx\nonumber\\
   &=\frac{1}{M_*}\frac{2s}{2s-d}\|U\|_{L^m\left(\mathbb{R}^d\right)}^{m}, \text { a.e. in } \Omega \text {. }
\end{align}
Recalling Proposition 3.2 of \cite{B24}, we can proceed to claim that $U$ is a stationary solution to \eqref{PKS}. This completes the proof.
    \end{proof}

\begin{remark}[The stationary solution]
     Similar results for stationary solutions can be obtained in \cite{V17}.
\end{remark}


\section{ Global existence and finite time blow-up}\label{sec3}

\subsection{ Finite time blow-up}
\begin{lemma}[Second moment]\label{second}
Under assumption \eqref{initial}, let $u$ be a weak solution to \eqref{PKS} for $m=2-\frac{2s}{d}$ on $[0, T )$ with initial condition $u_0$ for some $T \in(0,\infty]$. Then
\begin{align}\label{second2}
\frac{dm_2(t)}{dt}&  =2(d-2s)F(u).
\end{align}
\end{lemma}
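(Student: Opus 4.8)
The plan is to compute $\frac{d}{dt}m_2(t)$, where $m_2(t)=\int_{\mathbb{R}^d}|x|^2 u(x,t)\,dx$, by testing equation \eqref{PKS} against $|x|^2$ and then treating the diffusion and aggregation contributions separately. Formally one writes $\frac{d}{dt}\int |x|^2 u=\int |x|^2\big(\Delta u^m-\nabla\cdot(u\nabla\phi)\big)$, but since $|x|^2$ is not an admissible test function in the weak formulation \eqref{weak}, the first and most delicate task is to justify this identity rigorously. I would do this by inserting a family of smooth cutoffs $\psi_R(x)=|x|^2\chi(x/R)$ into \eqref{weak} and letting $R\to\infty$. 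The finiteness of the second moment from \eqref{initial}, together with the $L^1\cap L^\infty$ bound on $u$ guaranteed by \Cref{local} and the finiteness of $\omega(u)$ furnished by the HLS/VHLS estimates (\Cref{HLS}, \Cref{VHLS}), provides the tail control needed to pass to the limit and discard the error terms generated by the derivatives of $\chi$.

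For the diffusion term I would integrate by parts twice and use $\Delta|x|^2=2d$, giving
\[
\int_{\mathbb{R}^d}|x|^2\,\Delta u^m\,dx=\int_{\mathbb{R}^d}\Delta(|x|^2)\,u^m\,dx=2d\int_{\mathbb{R}^d}u^m\,dx.
\]
For the aggregation term I would integrate by parts once to obtain $2\int_{\mathbb{R}^d}u\,x\cdot\nabla\phi\,dx$, then insert the explicit gradient of the Riesz potential \eqref{phi},
\[
\nabla\phi(x)=-c_{d,s}(d-2s)\int_{\mathbb{R}^d}u(y)\,\frac{x-y}{|x-y|^{d-2s+2}}\,dy,
\]
and symmetrize the resulting double integral in $x\leftrightarrow y$. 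The key algebraic identity is $x\cdot(x-y)+y\cdot(y-x)=|x-y|^2$, which collapses the kernel $|x-y|^{-(d-2s+2)}$ back to $|x-y|^{-(d-2s)}$ and yields $2\int_{\mathbb{R}^d}u\,x\cdot\nabla\phi\,dx=-c_{d,s}(d-2s)\,\omega(u)$. This is precisely the symmetrized aggregation term in \eqref{weak} evaluated at $\psi=|x|^2$, for which $\nabla\psi(x)-\nabla\psi(y)=2(x-y)$.

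Combining the two pieces gives
\[
\frac{d}{dt}m_2(t)=2d\int_{\mathbb{R}^d}u^m\,dx-c_{d,s}(d-2s)\,\omega(u).
\]
The final step is the critical-exponent identification: since $m=2-\frac{2s}{d}$ we have $m-1=\frac{d-2s}{d}$, hence $2d=\frac{2(d-2s)}{m-1}$ and therefore $2d\int u^m\,dx=\frac{2(d-2s)}{m-1}\int u^m\,dx$. Recalling $F(u)=\frac{1}{m-1}\int_{\mathbb{R}^d}u^m\,dx-\frac{1}{2}c_{d,s}\,\omega(u)$ from \eqref{free energy}, the right-hand side equals exactly $2(d-2s)F(u)$, which is the claim. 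I expect the main obstacle to be the rigorous truncation argument of the first paragraph; the remaining computations are routine once absolute convergence of the aggregation double integral (again via the HLS bound) is secured, so that the $x\leftrightarrow y$ symmetrization is legitimate.
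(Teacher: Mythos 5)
Your proposal is correct and follows essentially the same route as the paper: integrate $|x|^2$ against the equation, use $\Delta|x|^2=2d$ for the diffusion term, symmetrize the aggregation double integral via $x\cdot(x-y)+y\cdot(y-x)=|x-y|^2$, and identify the result with $2(d-2s)F(u)$ using $m-1=(d-2s)/d$. You are in fact slightly more careful than the paper (which omits the cutoff justification and drops the factor $c_{d,s}(d-2s)$ in front of the double integral in its intermediate line, an apparent typo), and your constants are the correct ones.
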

\begin{proof}
By integrating by parts in \eqref{PKS} and symmetrising the second term, the time derivative of the second moment of the weak solution is endowed with
\begin{align}\label{second3}
\frac{dm_2(t)}{dt}&=2d\int_{\mathbb{R}^d} u^{m}dx+2\int_{\mathbb{R}^d}(x \cdot \nabla \phi)udx\nonumber\\
&=2d\int_{\mathbb{R}^d} u^{m}dx-\iint_{\mathbb{R}^d\times \mathbb{R}^d}\frac{u(y,t)u(x,t)}{|x-y|^{d-2s}}dydx\nonumber\\
 & =2(d-2s)F(u).
\end{align}
giving the desired identity.
\end{proof}

An easy consequence of the previous lemma is the following blow-up result.

\begin{theorem}[Blowing-up solutions]\label{B}
 If $M>M_{*}$, there exists initial data $u_{0}$ under initial condition \eqref{initial} with $\|u_{0}\|_{L^1(\mathbb{R}^d)}=M$ and negative free energy $F(u_{0})$. Moreover, if $u$ denotes a weak solution to \eqref{PKS} on  $[0, T)$ with initial condition  $u_{0}$, then $T<\infty$ and the ${L}^{m}$-norm of $u$ blows up in finite time.
\end{theorem}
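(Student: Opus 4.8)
The plan is to prove the statement in two stages: first construct an admissible datum of mass $M$ with strictly negative free energy, and then convert the negativity of $F$ into finite-time breakdown through the second moment identity of \Cref{second}.

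For the construction, I would start from the maximiser $U$ of $J$ furnished by \Cref{VHLS} and \Cref{C*}, normalised so that $\|U\|_{L^1(\mathbb{R}^d)}=M_*$; recall that $U$ is radially symmetric, non-increasing, compactly supported and bounded, so it meets \eqref{initial}. Since $J(U)=C_*$ gives $\omega(U)=C_*M_*^{2s/d}\|U\|_{L^m(\mathbb{R}^d)}^m$, and since the definition \eqref{M_*} of $M_*$ is exactly $\tfrac12 c_{d,s}C_*M_*^{2s/d}=\tfrac1{m-1}$, a direct substitution into the free energy functional $F$ yields $F(U)=0$. I would then set $u_0=(M/M_*)U$ with $M>M_*$. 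Using the scalings $\|u_0\|_{L^1}=M$, $\|u_0\|_{L^m}^m=(M/M_*)^m\|U\|_{L^m}^m$ and $\omega(u_0)=(M/M_*)^2\omega(U)$, the same computation gives
\[
F(u_0)=\frac{\|U\|_{L^m(\mathbb{R}^d)}^m}{m-1}\left[(M/M_*)^m-(M/M_*)^2\right]<0,
\]
because $M/M_*>1$ and $1<m<2$ (since $2<2s<d$). This $u_0$ has mass $M$ and negative energy, as required.

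For the blow-up, I would feed $u_0$ into \Cref{second}, which gives $\frac{d}{dt}m_2(t)=2(d-2s)F(u(t))$. Because $F$ is non-increasing along the free-energy solution by \eqref{free3}, one has $F(u(t))\le F(u_0)<0$ for all $t$, and since $d-2s>0$ this forces $\frac{d}{dt}m_2(t)\le 2(d-2s)F(u_0)<0$. Integrating, $m_2(t)\le m_2(0)+2(d-2s)F(u_0)\,t$, whose right-hand side reaches $0$ at the finite time $T^*:=m_2(0)/\big(2(d-2s)|F(u_0)|\big)$. As $m_2(t)\ge 0$ for every $t$ in the interval of existence, the maximal time must satisfy $T\le T^*<\infty$. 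Finally, the characterisation of the maximal existence time in \Cref{local} shows that the solution persists as long as its $L^m$-norm (equivalently the $L^r$-norm for $r>1$) stays finite; hence $T<\infty$ forces $\|u(\cdot,t)\|_{L^m(\mathbb{R}^d)}\to\infty$ as $t\uparrow T$.

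The construction and the virial inequality are essentially algebraic, so the genuine technical points are two: ensuring that the weak solution really dissipates the free energy, so that $F(u(t))\le F(u_0)$ holds rigorously (this is why one works with the free-energy solution obtained by the regularisation in Section 2 rather than an arbitrary weak solution), and ensuring that the second moment identity of \Cref{second} is valid along this solution, which relies on the finite-second-moment control propagated from \eqref{initial}. Once these are in place, the contradiction with $m_2\ge 0$ together with the blow-up criterion of \Cref{local} closes the argument with no further estimates.
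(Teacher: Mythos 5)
Your proposal is correct and follows essentially the same route as the paper: rescale the VHLS maximiser $U$ (with $F(U)=0$) to produce a mass-$M$ datum with $F(u_0)<0$, then run the virial argument from \Cref{second} together with the energy dissipation \eqref{free3}. Two small points of comparison are worth recording. First, your scaling of the interaction term is the right one: since $\omega$ is quadratic, $F\bigl((M/M_*)U\bigr)=\frac{1}{m-1}\|U\|_{L^m}^m\left[(M/M_*)^m-(M/M_*)^2\right]$, which is negative because $1<m<2$; the paper's displayed bracket $\left[(M/M_*)^m-M/M_*\right]$ would actually be \emph{positive} for $M>M_*$ and $m>1$, so the exponent $2$ in your version corrects what is evidently a typo in \eqref{B2}. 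Second, for the concluding step you invoke the blow-up alternative from \Cref{local} (finite maximal time forces the $L^m$-norm to explode), whereas the paper argues directly via a H\"older interpolation between the mass, the second moment and the $L^r$-norm, showing $\|u(\cdot,t)\|_{L^r}\to\infty$ as $m_2(t)\to 0$; the paper's route is more self-contained, while yours leans on the characterisation of $T_w$ that \Cref{local} only cites from the literature, but both are legitimate within the framework of Section 2.
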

\begin{proof}
{\it\textbf{Step 1}} (The negativity of free energy)\quad
    For $U\in L^1(\mathbb{R}^d)\cap L^m(\mathbb{R}^d)$ is given by \eqref{EL2}, we get $\|U\|_{L^1(\mathbb{R}^d)}=M_*$ and $J(U)=C_*$. If $M>M_{*}$, the initial condition $u_0$ is defined by
    \begin{align}\label{B1}
 u_0=\frac{M}{M_*}U
\end{align}
also satisfies initial condition \eqref{initial} and $\|u_0\|_{L^1(\mathbb{R}^d)}=M$. Some computations show that
\begin{align}\label{B2}
 F(u_0)=F\left(\frac{M}{M_*}U\right)&=\frac{1}{m-1}\int_{\mathbb{R}^d}\left(\frac{M}{M_*}U\right)^mdx-\frac{1}{2}\int_{\mathbb{R}^d}\phi\left(\frac{M}{M_*}U\right)dx\nonumber\\
 &=\frac{1}{m-1}\int_{\mathbb{R}^d}U^mdx\left[\left(\frac{M}{M_*}\right)^m-\frac{M}{M_*}\right]<0.
\end{align}
By combining \eqref{free energy}, \eqref{second2} and \eqref{B2}, we obtain
\begin{align}\label{sf10}
 \frac{dm_2(t)}{dt}=2(d-2s)F(u)\leq2(d-2s)F(u_0)<0
\end{align}
Thus there exists a $T > 0$ such that $\lim\limits_{t\to T}m_2(t)=0$.

{\it\textbf{Step 2}} (Finite time blow-up)\quad
It follows from the H\"{o}lder inequality that
\begin{align}\label{sf2}
\int_{\mathbb{R}^d}  u_0dx=\int_{\mathbb{R}^d}  udx&=\int_{|x|<R}  udx+\int_{|x|\geq R}  udx\nonumber\\
&\leq C_3\left(R^d\right)^{\frac{r-1}{r}}\|u\|_{L^r(\mathbb{R}^d)}+\frac{1}{R^2}m_2(t).
\end{align}
Choosing $R=\left(\frac{m_2(t)}{C_3\|u\|_{ L^r(\mathbb{R}^d)}}\right)^{\frac{1}{d(r-1)/r+2}}$ gives
 \begin{align}\label{sf7}
\int_{\mathbb{R}^d}  u_0dx\leq c\|u\|_{L^r(\mathbb{R}^d)}^{{\frac{2}{d(r-1)/r+2}}}m_2(t)^{\frac{d(r-1)/r}{d(r-1)/r+2}},
\end{align}
It follows that
\begin{align}\label{sf8}
\lim\limits_{t\to T}\text{sup}\|u(\cdot,t)\|_{L^r(\mathbb{R}^d)}\geq \frac{\|  u_0\|_{L^1(\mathbb{R}^d)}^{\frac{d(r-1)/r+2}{2}}}{c m_2(t)^{\frac{d(r-1)/r}{2}}}=\infty .
\end{align}
Thus the proof is completed.
\end{proof}

\subsection{Global existence}
\begin{theorem}[Global existence]\label{GE}
Under assumption \eqref{initial}, if $\|u_0\|_{L^1(\mathbb{R}^d)}=M<M_*$, then there exists a weak solution to \eqref{PKS} on $[0, \infty)$.
\end{theorem}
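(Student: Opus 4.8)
The plan is to exploit the monotonicity of the free energy \eqref{free3} together with the VHLS inequality of \Cref{VHLS} and the conservation of mass \eqref{Mass} to produce a bound on $\|u(\cdot,t)\|_{L^m(\mathbb{R}^d)}$ that is uniform in time. Such a bound rules out the blow-up scenario recorded in \eqref{est23}, so that the maximal existence time $T_w$ supplied by \Cref{local} must be infinite.

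First I would bound the free energy from below. Since $\|u(\cdot,t)\|_{L^1(\mathbb{R}^d)}=M$ for all $t$ by \eqref{Mass}, estimating the nonlocal term $\omega(u)$ by the VHLS inequality \eqref{VHLS1} gives
\[
F(u)\;\geq\;\frac{1}{m-1}\,\|u\|_{L^m(\mathbb{R}^d)}^m\left(1-\frac{(m-1)\,c_{d,s}\,C_*}{2}\,M^{2s/d}\right).
\]
Recalling the definition \eqref{M_*} of the critical mass, one has $\tfrac{(m-1)c_{d,s}C_*}{2}=M_*^{-2s/d}$, and hence
\[
F(u(\cdot,t))\;\geq\;\frac{1}{m-1}\left(1-\left(\frac{M}{M_*}\right)^{2s/d}\right)\|u(\cdot,t)\|_{L^m(\mathbb{R}^d)}^m.
\]
When $M<M_*$ the bracketed constant $K:=\tfrac{1}{m-1}\bigl(1-(M/M_*)^{2s/d}\bigr)$ is strictly positive, which is exactly where the threshold hypothesis enters.

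Next I would combine this lower bound with the dissipation $F(u(\cdot,t))\leq F(u_0)$ from \eqref{free3}, where $F(u_0)$ is finite by \eqref{initial}, to obtain the uniform estimate
\[
\|u(\cdot,t)\|_{L^m(\mathbb{R}^d)}^m\;\leq\;\frac{F(u_0)}{K},\qquad t\in[0,T_w).
\]
To upgrade this $L^m$-control into global existence I would return to the $L^r$ energy identity \eqref{est1}--\eqref{est10} for the regularised solutions $u_\varepsilon$, but re-run the Gagliardo-Nirenberg-Sobolev interpolation \eqref{est3} using the now-controlled $L^m$-norm as the low-order pivot in place of the $L^r$-norm chosen in \eqref{est6}. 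This replaces the superlinear right-hand side of \eqref{est10} by a term at most linear in $\|u_\varepsilon\|_{L^r}^r$ after the gradient term is absorbed by Young's inequality, so a Gronwall argument yields $L^r$-bounds uniform in $t$ and in $\varepsilon$ for every $r<\infty$. Iterating these estimates then produces the $L^\infty$-bound \eqref{regularization5} with $C_0$ independent of $\varepsilon$, and the convergence \eqref{regularization6} identifies the limit $u$ as a weak solution on $[0,\infty)$.

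The main obstacle lies in this final bootstrap. One must first justify the energy argument at the level of the regularised problem, establishing the lower bound and the dissipation for the free energy built from the kernel $R_\varepsilon$ uniformly in $\varepsilon$, so that the uniform $L^m$-bound is genuinely available for $u_\varepsilon$ before passing to the limit. One must then verify that the critical choice $m=2-2s/d$ makes the interpolation exponents in \eqref{est4}--\eqref{est7} admissible when the pivot is $L^m$, i.e. that the gradient exponent stays below $2$ (as in \eqref{est7}) so the dissipation is truly absorbable and the differential inequality becomes subcritical; this subcriticality, and hence global existence, is what the strict positivity of $K$ guarantees.
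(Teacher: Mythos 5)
Your proposal is correct and follows essentially the same route as the paper: the lower bound $F(u)\geq \frac{C_*c_{d,s}}{2}\bigl(M_*^{2s/d}-M^{2s/d}\bigr)\|u\|_{L^m(\mathbb{R}^d)}^m$ obtained from the VHLS inequality and mass conservation, combined with the energy dissipation $F(u)\leq F(u_0)$, gives the uniform-in-time $L^m$ bound, and the blow-up criterion of \Cref{local} then forces $T_w=\infty$. The extra bootstrap you describe (re-running the Gagliardo--Nirenberg interpolation with the $L^m$-norm as pivot to reach $L^\infty$) is precisely what the paper delegates to \Cref{local} rather than re-deriving, so the two arguments coincide in substance.
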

\begin{proof}
By \Cref{local}, there are $T_{w}$ and a weak solution to \eqref{PKS} in $[0, T_{w})$ with initial data $u_{0}$. We then infer from \eqref{VHLS1} and \eqref{M_*} that there exists a $u_0$ with $\|u_0\|_{L^1(\mathbb{R}^d)}=M<M_*$, for all $t \in [0, T_{w})$,
    \begin{align}\label{GE1}
      F(u_0)\geq  F(u) \geq \frac{C_{*}c_{d,s}}{2}\left(M_*^{\frac{2s}{d}}  -M^{\frac{2s}{d}}\right)\|u\|_{L^m(\mathbb{R}^d)}^m.
    \end{align}
    Therefore, we can deduce that  $u \in L^{\infty}
    \left(0,~ \min (T, T_{w}) ; ~L^{m}(\mathbb{R}^{d})\right)$ for every  $T>0$ which implies that $T_{w}=\infty$  by \Cref{local}.
\end{proof}

\end{document}